
\documentclass{birkjour}
%
%
%
 \newtheorem{thm}{Theorem}[section]
 \newtheorem{cor}[thm]{Corollary}
 
 \newtheorem{prop}[thm]{Proposition}
 \newtheorem{notation}{Notation}
 \theoremstyle{definition}
 \newtheorem{defn}[thm]{Definition}
 \theoremstyle{remark}

 \numberwithin{equation}{section}

\begin{document}

%
%
%
%
%
%
%
%
%

\title[$AW(k)$-Type Curves According to the Bishop Frame]
 {$AW(k)$-Type Curves \\ According to the Bishop Frame}

\author[\.{I}lim K\.{I}\c{S}\.{I}]{\.{I}lim K\.{I}\c{S}\.{I}}

\address{Kocaeli University, Department of Mathematics, \\
41380, Kocaeli, Turkey}

\email{ilim.ayvaz@kocaeli.edu.tr}

\author{G\"{u}nay \"{O}ZT\"{U}RK}
\address{TKocaeli University, Department of Mathematics, \\
41380, Kocaeli, Turkey} \email{ogunay@kocaeli.edu.tr}
\subjclass{Primary 53A04; Secondary 53C40}

\keywords{Bishop frame, Frenet frame, $AW(k)$-type}

\date{January 1, 2004}


\begin{abstract}
In this study, we consider $AW(k)$-type curves according to the
Bishop Frame in Euclidean space $\mathbb{E}^{3}$. We give the
relations between the Bishop curvatures $k_{1}$, $k_{2}$ of a curve
in $\mathbb{E}^{3}$.
\end{abstract}

\maketitle
\section{Introduction}

The ability to "ride" along a three-dimensional space curve and
illustrate the properties of the curve, such as curvature and
torsion, would be a great asset to Mathematicians. The classic
Serret-Frenet frame provides such ability, however the Frenet-Serret
frame is not defined for all points along every curve. A new frame
is needed for the kind of Mathematical analysis that is typically
done with computer graphics. The Relatively Parallel Adapted Frame
or Bishop Frame could provide the desired means to ride along any
given space curve \cite{B}. The Bishop Frame has many properties
that make it ideal for mathematical research. Another area of
interested about the Bishop Frame is so-called Normal Development,
or the graph of the twisting motion of Bishop Frame. This
information along with the initial position and orientation of the
Bishop Frame provide all of the information necessary to define the
curve. The Bishop frame may have applications in the area of Biology
and Computer Graphics. For example it may be possible to compute
information about the shape of sequences of DNA using a curve
defined by the Bishop frame. The Bishop frame may also provide a new
way to control virtual cameras in computer animations. In \cite{KB}
authors studied natural curvatures of Bishop frame. In \cite{BK},
the same authors\ considered slant helix according to Bishop frame
in Euclidean $3$-Space. In \cite{UKT}, authors researched the spinor
formulations of curves according to Bishop frames in
$\mathbb{E}^{3}$.

The notion of $AW(k)$-type submanifolds was introduced by Arslan and
West in \cite{AW}. In particular, many works related to curves of
$AW(k)$-type have been done by several authors. For example, in
\cite{AO, OG}, the authors
gave curvature conditions and characterizations related to these curves in $%
\mathbb{E}^{m}$. In \cite{KA}, authors considered curves and surfaces of $%
AW(k)$ ($k=1,2$ or $3$)-type. They also give related examples of
curves and surfaces satisfying $AW(k)$-type conditions. In \cite{Y},
Yoon studied curves of $AW(k)$-type in the Lie group $G$ with a
bi-invariant metric. Also, he characterized general helices in terms
of $AW(k)$-type curve in the Lie group $G$.

This paper is organized as follows: Section $2$ gives some basic
concepts of
the Frenet \ frame and Bishop frame of a curve in $\mathbb{E}^{3}.$ Section $%
3$ tells the curvature conditions and characterizations of
$AW(k)$-type curve of osculating order $3.$ In the final section we
consider $AW(k)$-type curves and their curvatures $k_{1}$, $k_{2}$
according to the Bishop Frame in Euclidean space $\mathbb{E}^{3}$.

\section{Basic Concepts}

Let $\gamma =\gamma (s):I\rightarrow \mathbb{E}^{3}$ be arbitrary
curve in the Euclidean space $\mathbb{E}^{3}$, where $I$ is interval
in $\mathbb{R}$.
$\gamma $ is said to be of unit speed (parametrized by arc length function $%
s $) if $\left \Vert \gamma ^{\text{ }\prime }(s)\right \Vert =1.$
Then the
derivatives of the Frenet frame of $\gamma $ (Frenet-Serret formula);%
\begin{equation*}
\left[
\begin{array}{c}
T^{^{\prime }} \\
N^{^{\prime }} \\
B^{^{\prime }}%
\end{array}%
\right] =\left[
\begin{array}{ccc}
0 & \kappa & 0 \\
-\kappa & 0 & \tau \\
0 & -\tau & 0%
\end{array}%
\right] \left[
\begin{array}{c}
T \\
N \\
B%
\end{array}%
\right]
\end{equation*}%
where $\left \{ T,N,B\right \} $ is the Frenet frame of $\gamma $ and $%
\kappa $, $\tau $ are the curvature and torsion of curve $\gamma $,
respectively \cite{C}.

The Bishop frame or parallel transport frame is an alternative
approach to defining a moving frame that is well-defined even when
the curve has vanishing second derivative. One can express parallel
transport of an orthonormal frame along a curve simply by parallel
transporting each component of the frame. The tangent vector and any
convenient arbitrary basis for the remainder of the frame are used.
Therefore, the Bishop (frame)
formulas are expressed as%
\begin{equation*}
\left[
\begin{array}{c}
T~^{\prime } \\
M_{1}^{\prime } \\
M_{2}^{\prime }%
\end{array}%
\right] =\left[
\begin{array}{ccc}
0 & k_{1} & k_{2} \\
-k_{1} & 0 & 0 \\
-k_{2} & 0 & 0%
\end{array}%
\right] \left[
\begin{array}{c}
T \\
M_{1} \\
M_{2}%
\end{array}%
\right]
\end{equation*}%
where $\left \{ T,M_{1},M_{2}\right \} $ is the Bishop Frame and $k_{1}$, $%
k_{2}$ are called first and second Bishop curvatures, respectively
\cite{B}.

The relation between Frenet frame and Bishop frame is given as follows:%
\begin{equation*}
\left[
\begin{array}{c}
T \\
N \\
B%
\end{array}%
\right] =\left[
\begin{array}{ccc}
1 & 0 & 0 \\
0 & \cos \theta & \sin \theta \\
0 & -\sin \theta & \cos \theta%
\end{array}%
\right] \left[
\begin{array}{c}
T \\
M_{1} \\
M_{2}%
\end{array}%
\right]
\end{equation*}%
where $\theta (s)=\arctan \left( \frac{k_{2}}{k_{1}}\right) $, $\tau
(s)=\left( \frac{d\theta (s)}{ds}\right) $ and $\kappa (s)=\sqrt{%
k_{1}^{2}+k_{2}^{2}}$. Here Bishop curvatures are defined by
$k_{1}=\kappa \cos \theta ,$ $k_{2}=\kappa \sin \theta $.

\section{$AW(k)$-type Curves in $\mathbb{E}^{3}$}

In this section, we introduce Frenet curves of $AW(k)$-type.

\begin{prop}
\cite{AO} Let $\gamma $ be a Frenet curve of $\mathbb{E}^{3}$
osculating
order 3 then we have;%
\begin{equation*}
\gamma ^{\prime }\left( s\right) =T\left( s\right)
\end{equation*}%
\begin{equation*}
\gamma ^{\prime \prime }\left( s\right) =T^{\prime }\left( s\right)
=\kappa \left( s\right) N\left( s\right)
\end{equation*}%
\begin{equation*}
\gamma ^{\prime \prime \prime }\left( s\right) =-\kappa ^{2}\left(
s\right) T(s)+\kappa ^{\prime }\left( s\right) N\left( s\right)
+\kappa \left( s\right) \tau (s)B\left( s\right)
\end{equation*}%
\begin{eqnarray*}
\gamma ^{\imath v}\left( s\right) &=&-3\kappa (s)\kappa ^{\prime
}(s)T\left( s\right) +\left \{ \kappa ^{\prime \prime }(s)-\kappa
^{3}(s)-\kappa \left(
s\right) \tau ^{2}(s)\right \} N\left( s\right) \\
&&+\left \{ 2\kappa ^{\prime }(s)\tau (s)+\kappa \left( s\right)
\tau ^{\prime }(s)\right \} B\left( s\right) .
\end{eqnarray*}
\end{prop}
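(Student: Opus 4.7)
The plan is straightforward iterated differentiation using the Frenet--Serret formulas $T' = \kappa N$, $N' = -\kappa T + \tau B$, and $B' = -\tau N$. The first two identities are immediate: by the unit-speed assumption $\gamma' = T$, and differentiating once more and applying the first Frenet formula gives $\gamma'' = T' = \kappa N$.

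For the third derivative I would apply the product rule to $\gamma'' = \kappa N$, obtaining $\gamma''' = \kappa' N + \kappa N'$, and then substitute the formula for $N'$ to collect terms. This immediately yields $\gamma''' = -\kappa^{2} T + \kappa' N + \kappa\tau B$, which is the stated expression.

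The fourth derivative is the only step that requires care. Starting from the expression just obtained for $\gamma'''$, I would differentiate each of the three summands with the product rule, producing six intermediate terms involving $T'$, $N'$, and $B'$. After substituting the Frenet formulas once more, I would group everything by the frame vectors $T$, $N$, $B$. The $T$-coefficient combines the contribution from $-(\kappa^{2})'T$ and the $-\kappa$-piece of $\kappa' N'$, giving $-2\kappa\kappa' - \kappa\kappa' = -3\kappa\kappa'$; the $N$-coefficient collects $\kappa''$, together with $-\kappa^{3}$ from $-\kappa^{2}T'$ and $-\kappa\tau^{2}$ from $\kappa\tau B'$; the $B$-coefficient collects $\kappa'\tau$ from $\kappa' N'$ together with $(\kappa\tau)' = \kappa'\tau + \kappa\tau'$, for a total of $2\kappa'\tau + \kappa\tau'$.

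The only potential obstacle is the bookkeeping in this last step, since six product-rule terms expand into nine after the Frenet substitutions; aside from careful term collection, no genuine difficulty arises, and the proposition follows directly.
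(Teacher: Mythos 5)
Your computation is correct and is exactly the standard argument: the paper states this proposition without proof (citing Arslan--\"{O}zg\"{u}r), and the intended derivation is precisely your iterated differentiation of $\gamma'=T$ with repeated substitution of the Frenet--Serret formulas $T'=\kappa N$, $N'=-\kappa T+\tau B$, $B'=-\tau N$, followed by collecting coefficients of $T$, $N$, $B$. All three coefficient collections in your fourth-derivative step check out ($-2\kappa\kappa'-\kappa\kappa'=-3\kappa\kappa'$, $\kappa''-\kappa^{3}-\kappa\tau^{2}$, and $2\kappa'\tau+\kappa\tau'$), so there is nothing to add.
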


\begin{notation}
\cite{AO} Let us write%
\begin{equation*}
N_{1}(s)=\kappa \left( s\right) N(s)
\end{equation*}%
\begin{equation*}
N_{2}(s)=\kappa ^{\prime }\left( s\right) N\left( s\right) +\kappa
\left( s\right) \tau (s)B\left( s\right)
\end{equation*}%
\begin{equation*}
N_{3}(s)=\left \{ \kappa ^{\prime \prime }(s)-\kappa ^{3}(s)-\kappa
\left( s\right) \tau ^{2}(s)\right \} N\left( s\right) +\left \{
2\kappa ^{\prime }(s)\tau (s)+\kappa \left( s\right) \tau ^{\prime
}(s)\right \} B\left( s\right) .
\end{equation*}
\end{notation}

\begin{defn}
\cite{AO} Frenet curves (of osculating order 3) are

$i)$ of type weak $AW(2)$ if they satisfy%
\begin{equation*}
N_{3}\left( s\right) =\left \langle N_{3}\left( s\right)
,N_{2}^{\ast }\left( s\right) \right \rangle N_{2}^{\ast }\left(
s\right)
\end{equation*}

$ii)$ of type weak $AW(3)$ if they satisfy%
\begin{equation*}
N_{3}\left( s\right) =\left \langle N_{3}\left( s\right)
,N_{1}^{\ast }\left( s\right) \right \rangle N_{1}^{\ast }\left(
s\right) .
\end{equation*}
\end{defn}

where
\begin{equation*}
N_{1}^{\ast }\left( s\right) =\frac{N_{1}\left( s\right) }{\left
\Vert N_{1}\left( s\right) \right \Vert },
\end{equation*}%
\begin{equation*}
N_{2}^{\ast }\left( s\right) =\frac{N_{2}\left( s\right) -\left
\langle N_{2}\left( s\right) ,N_{1}^{\ast }\left( s\right) \right
\rangle N_{1}^{\ast }\left( s\right) }{\left \Vert N_{2}\left(
s\right) -\left \langle N_{2}\left( s\right) ,N_{1}^{\ast }\left(
s\right) \right \rangle N_{1}^{\ast }\left( s\right) \right \Vert }.
\end{equation*}

\begin{prop}
\cite{AO} Let $\gamma $ be a Frenet curve of order 3. Then $\gamma $
is a weak $AW(2)$ type curve if and only if
\begin{equation*}
\kappa ^{\prime \prime }(s)-\kappa ^{3}(s)-\kappa \left( s\right)
\tau ^{2}(s)=0.
\end{equation*}
\end{prop}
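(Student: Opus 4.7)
The plan is to reduce the vectorial condition in the definition of weak $AW(2)$ to a single scalar equation by computing $N_1^{\ast}$ and $N_2^{\ast}$ explicitly in the Frenet frame, and then reading off the component of $N_3$ that must vanish.

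First I would normalise $N_1$. Since $N_1(s)=\kappa(s)N(s)$ and we may assume $\kappa(s)>0$ for a Frenet curve of osculating order $3$, we get $N_1^{\ast}(s)=N(s)$. Next I would compute the Gram--Schmidt correction for $N_2$: using $\langle N_2,N_1^{\ast}\rangle=\langle \kappa' N+\kappa\tau B,N\rangle=\kappa'$, we have
\begin{equation*}
N_2(s)-\langle N_2(s),N_1^{\ast}(s)\rangle N_1^{\ast}(s)=\kappa(s)\tau(s)B(s),
\end{equation*}
whose norm is $|\kappa(s)\tau(s)|$, so $N_2^{\ast}(s)=\varepsilon B(s)$ with $\varepsilon=\mathrm{sgn}(\kappa\tau)$. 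In particular $N_2^{\ast}$ is parallel to $B$.

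With $N_2^{\ast}$ pointing along $B$, the condition $N_3=\langle N_3,N_2^{\ast}\rangle N_2^{\ast}$ is simply the statement that $N_3$ lies in the line spanned by $B$, i.e.\ the $N$-component of $N_3$ vanishes. Reading $N_3$ off from the Notation block, its $N$-component is precisely $\kappa''(s)-\kappa^3(s)-\kappa(s)\tau^2(s)$, while the $B$-component is $2\kappa'(s)\tau(s)+\kappa(s)\tau'(s)$. Hence
\begin{equation*}
\gamma\ \text{is weak } AW(2)\ \Longleftrightarrow\ \kappa''(s)-\kappa^3(s)-\kappa(s)\tau^2(s)=0,
\end{equation*}
which is what is to be proved.

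The only mildly delicate point is the non-degeneracy needed to define $N_2^{\ast}$: one must have $\kappa\tau\neq 0$ so that $N_2-\langle N_2,N_1^{\ast}\rangle N_1^{\ast}$ does not vanish; this is exactly the osculating-order-$3$ assumption. The rest is essentially bookkeeping with orthonormal bases, and the equivalence in both directions is immediate once the $N$-component has been identified.
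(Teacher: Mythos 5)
Your argument is correct: the paper states this proposition without proof (it is quoted from \cite{AO}), so there is no in-text argument to compare against, but your computation is the standard one and fills the gap cleanly. Identifying $N_{1}^{\ast}=N$ and $N_{2}^{\ast}=\pm B$ via Gram--Schmidt, so that the weak $AW(2)$ condition reduces to the vanishing of the $N$-component of $N_{3}$, is exactly the right reduction, and your remark that the osculating-order-$3$ hypothesis guarantees $\kappa\tau\neq 0$ (hence $N_{2}^{\ast}$ is well defined) is the only delicate point and you have handled it.
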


\begin{defn}
\cite{AO} Frenet curves of order 3\ are

\ $i)$ of type $AW(1)$ if they satisfy%
\begin{equation*}
N_{3}\left( s\right) =0
\end{equation*}%
\ \ \ \ \ \ \ \ \ \ \ \

$ii)$\ of type $AW(2)$ if they satisfy%
\begin{equation*}
\left \Vert N_{2}\right \Vert ^{2}N_{3}=\left \langle
N_{3},N_{2}\right \rangle N_{2}\ \
\end{equation*}%
\ \ \ \ \

$iii)$ of type $AW(3)$ if they satisfy%
\begin{equation*}
\left \Vert N_{1}\right \Vert ^{2}N_{3}=\left \langle
N_{3},N_{1}\right \rangle N_{1}.
\end{equation*}
\end{defn}

\begin{prop}
\cite{AO} Let $\gamma $ be a Frenet curve of order 3. Then $\gamma $
is of
type $AW(1)$ if and only if%
\begin{equation*}
\kappa ^{\prime \prime }(s)-\kappa ^{3}(s)-\kappa \left( s\right)
\tau ^{2}(s)=0
\end{equation*}%
and%
\begin{equation*}
\tau \left( s\right) =\frac{c}{\kappa ^{2}\left( s\right) },c\in \mathbb{R}%
\text{.}
\end{equation*}
\end{prop}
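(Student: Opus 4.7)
The plan is to unpack the definition of $AW(1)$-type, namely $N_{3}(s)=0$, using the explicit formula for $N_{3}(s)$ from the Notation. Since $N(s)$ and $B(s)$ are linearly independent (in fact orthonormal) vectors of the Frenet frame, the vector equation $N_{3}(s)=0$ is equivalent to the vanishing of both scalar coefficients appearing in the $N$-component and the $B$-component of $N_{3}$.

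The $N$-component gives immediately the first condition
\begin{equation*}
\kappa ^{\prime \prime }(s)-\kappa ^{3}(s)-\kappa (s)\tau ^{2}(s)=0,
\end{equation*}
and the $B$-component gives the auxiliary ODE
\begin{equation*}
2\kappa ^{\prime }(s)\tau (s)+\kappa (s)\tau ^{\prime }(s)=0.
\end{equation*}
The key observation is that, after multiplying this second equation by $\kappa (s)$, the left-hand side becomes the exact derivative
\begin{equation*}
\bigl(\kappa ^{2}(s)\tau (s)\bigr)^{\prime }=2\kappa (s)\kappa ^{\prime }(s)\tau (s)+\kappa ^{2}(s)\tau ^{\prime }(s),
\end{equation*}
so the ODE integrates to $\kappa ^{2}(s)\tau (s)=c$ for some constant $c\in\mathbb{R}$, which rearranges to $\tau (s)=c/\kappa ^{2}(s)$ wherever $\kappa (s)\neq 0$ (this is fine since $\gamma$ has osculating order $3$).

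Conversely, assuming the two stated conditions, the $N$-component of $N_{3}$ vanishes by hypothesis, and the relation $\tau =c/\kappa^{2}$ is equivalent to $\kappa ^{2}\tau$ being constant, which upon differentiating and dividing by $\kappa$ gives back $2\kappa'\tau+\kappa\tau' = 0$, so the $B$-component of $N_{3}$ vanishes as well; hence $N_{3}=0$, i.e., $\gamma$ is of type $AW(1)$. There is no real obstacle here: the entire argument is a direct reading of the Notation formula combined with recognizing the exact-derivative structure of the $B$-coefficient, so the only care needed is to note that the decomposition into $N$ and $B$ components is valid (linear independence of $N$ and $B$) and that $\kappa>0$ along the curve so that division by $\kappa^{2}$ is legitimate.
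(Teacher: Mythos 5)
Your argument is correct: the paper states this proposition without proof (it is quoted from \cite{AO}), and your derivation --- reading off the $N$- and $B$-components of $N_{3}(s)=0$ and integrating $2\kappa'\tau+\kappa\tau'=0$ via the exact derivative $(\kappa^{2}\tau)'$ --- is exactly the standard argument behind it, consistent with how the paper treats the analogous $AW(3)$ condition. Nothing further is needed.
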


\begin{prop}
\cite{AO} Let $\gamma $ be a Frenet curve of order 3. Then $\gamma $
is of
type $AW(2)$ if and only if%
\begin{equation*}
2(\kappa ^{\prime }\left( s\right) )^{2}\tau \left( s\right) +\kappa
\left( s\right) \kappa ^{\prime }\left( s\right) \tau ^{\prime
}\left( s\right) =\kappa \left( s\right) \kappa ^{\prime \prime
}\left( s\right) \tau \left( s\right) -\kappa ^{4}\left( s\right)
\tau \left( s\right) -\kappa ^{2}\left( s\right) \tau ^{3}\left(
s\right) .
\end{equation*}
\end{prop}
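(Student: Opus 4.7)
The plan is to unpack the $AW(2)$ definition and reduce it to a $2\times 2$ linear-dependence statement inside the plane $\mathrm{span}\{N,B\}$. From the Notation I already have
\[
N_2 = \kappa' N + \kappa\tau B,\qquad N_3 = (\kappa''-\kappa^3-\kappa\tau^2) N + (2\kappa'\tau+\kappa\tau') B,
\]
so both $N_2$ and $N_3$ live in the two-dimensional subspace $\mathrm{span}\{N,B\}$.

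Next I would read the defining identity $\Vert N_2\Vert^2 N_3 = \langle N_3,N_2\rangle N_2$ as saying that $N_3$ coincides with its orthogonal projection onto the line $\mathbb{R}\,N_2$; geometrically this is exactly parallelism of $N_3$ and $N_2$ (including the degenerate case where one of them vanishes). Since $\{N,B\}$ is an orthonormal basis of the plane they both inhabit, this parallelism is equivalent to the vanishing of the $2\times 2$ determinant of the coefficient matrix, that is,
\[
\kappa'\,(2\kappa'\tau+\kappa\tau') - \kappa\tau\,(\kappa''-\kappa^3-\kappa\tau^2) = 0.
\]

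Finally I would expand this determinant and move terms across the equality, isolating $2(\kappa')^2\tau + \kappa\kappa'\tau'$ on one side and $\kappa\kappa''\tau - \kappa^4\tau - \kappa^2\tau^3$ on the other; this is exactly the identity in the proposition. The argument is essentially computational, and I do not expect a serious obstacle: the only conceptual point is recognising that the seemingly elaborate tensor relation in the definition of $AW(2)$ collapses, in the planar case, to a single $2\times 2$ determinant. The one step I would double-check is that the degenerate situations ($N_2=0$ or $N_3=0$) still yield the claimed equivalence, since both sides of the stated equation vanish identically in those cases and no extra hypothesis is required.
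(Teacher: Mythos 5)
Your argument is correct: the $AW(2)$ identity $\Vert N_2\Vert^{2}N_3=\langle N_3,N_2\rangle N_2$ does reduce, since $N_2$ and $N_3$ both lie in $\mathrm{span}\{N,B\}$, to the vanishing of the determinant $\kappa'\left(2\kappa'\tau+\kappa\tau'\right)-\kappa\tau\left(\kappa''-\kappa^{3}-\kappa\tau^{2}\right)$, and expanding this gives exactly the stated equation; your remark on the degenerate cases is also sound (and for a curve of osculating order $3$ one has $\kappa\tau\neq 0$, so $N_2\neq 0$ anyway). The paper itself states this proposition without proof, citing Arslan--\"Ozg\"ur, and your determinant reduction is precisely the standard argument used there, so there is nothing to object to.
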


\begin{prop}
\cite{AO} Let $\gamma $ be a Frenet curve of order 3. Then $\gamma $
is of
type $AW(3)$ if and only if%
\begin{equation*}
2\kappa ^{\prime }\left( s\right) \tau \left( s\right) +\kappa
\left( s\right) \tau ^{\prime }\left( s\right) =0.
\end{equation*}%
The solution of this differential equation is $\tau \left( s\right) =\frac{c%
}{\kappa ^{2}\left( s\right) },c\in \mathbb{R}$.
\end{prop}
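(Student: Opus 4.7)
The plan is to unpack the definition of $AW(3)$-type directly, since everything reduces to projecting the equation $\|N_{1}\|^{2}N_{3}=\langle N_{3},N_{1}\rangle N_{1}$ onto the orthonormal pair $\{N,B\}$ and reading off the resulting scalar relation.

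First I would use the formulas from the Notation block to compute the two scalar coefficients involved. Since $N_{1}=\kappa N$ and $\{N,B\}$ is orthonormal, $\|N_{1}\|^{2}=\kappa^{2}$, and writing $N_{3}=aN+bB$ with
\begin{equation*}
a=\kappa''-\kappa^{3}-\kappa\tau^{2},\qquad b=2\kappa'\tau+\kappa\tau',
\end{equation*}
the inner product is $\langle N_{3},N_{1}\rangle =\kappa\, a$. Substituting into the defining identity gives
\begin{equation*}
\kappa^{2}(aN+bB)=\kappa^{2}a\,N,
\end{equation*}
so comparing the $N$- and $B$-components shows that the $N$-equation is an identity while the $B$-equation reads $\kappa^{2}b=0$.

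Next I would invoke the standing assumption that $\gamma$ is a Frenet curve of osculating order $3$, so $\kappa(s)\neq 0$, and divide by $\kappa^{2}$ to obtain the stated condition $2\kappa'\tau+\kappa\tau'=0$. This chain of deductions is reversible, since each step either used an orthonormal decomposition or a division by a nonvanishing function, which yields the "if and only if" statement.

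Finally, to solve the ODE I would observe that the left-hand side is, up to a factor of $\kappa$, a total derivative: multiplying by $\kappa$ gives $2\kappa\kappa'\tau+\kappa^{2}\tau'=(\kappa^{2}\tau)'=0$, hence $\kappa^{2}\tau=c$ for some constant $c\in\mathbb{R}$, which rearranges to $\tau=c/\kappa^{2}$. There is no real obstacle here beyond bookkeeping; the only point requiring care is keeping track of which factor of $\kappa$ can legitimately be cancelled, which is justified by the osculating order 3 hypothesis.
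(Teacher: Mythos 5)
Your computation is correct and is the standard argument: the paper itself gives no proof of this proposition (it is quoted from the reference [AO]), but your decomposition of the defining identity $\left\Vert N_{1}\right\Vert ^{2}N_{3}=\left\langle N_{3},N_{1}\right\rangle N_{1}$ onto the orthonormal pair $\{N,B\}$, the observation that the $N$-component is automatic while the $B$-component forces $\kappa ^{2}\left( 2\kappa ^{\prime }\tau +\kappa \tau ^{\prime }\right) =0$, and the integration $\left( \kappa ^{2}\tau \right) ^{\prime }=\kappa \left( 2\kappa ^{\prime }\tau +\kappa \tau ^{\prime }\right) =0$ are exactly what the cited source does. The equivalence is genuinely reversible as you note, since $\kappa \neq 0$ for a curve of osculating order $3$, so nothing is missing.
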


\section{$AW(k)$-type Curves According to the Bishop Frame}

In this section, we consider $AW(k)$-type curves according to the
Bishop Frame in Euclidean space $\mathbb{E}^{3}$.

\begin{prop}
Let $\gamma $ be a unit speed curve of osculating order 3 in
$\mathbb{E}^{3}$ and $\left \{ T,M_{1},M_{2}\right \} $ be Bishop
frame of the curve. Similar
with proposition 1 in \cite{AO}, we have%
\begin{equation*}
\gamma ^{\prime }\left( s\right) =T\left( s\right)
\end{equation*}%
\begin{equation*}
\gamma ^{\prime \prime }\left( s\right) =k_{1}\left( s\right)
M_{1}\left( s\right) +k_{2}\left( s\right) M_{2}\left( s\right)
\end{equation*}%
\begin{equation*}
\gamma ^{\prime \prime \prime }\left( s\right) =\left \{
-k_{1}^{2}\left( s\right) -k_{2}^{2}\left( s\right) \right \}
T\left( s\right) +k_{1}^{\prime }\left( s\right) M_{1}\left(
s\right) +k_{2}^{\prime }\left( s\right) M_{2}\left( s\right)
\end{equation*}%
\begin{eqnarray*}
\gamma ^{\imath v}\left( s\right) &=&\left \{ \left(
-k_{1}^{2}\left( s\right) -k_{2}^{2}\left( s\right) \right) ^{\prime
}+\left( -k_{1}\left( s\right) k_{1}^{\prime }\left( s\right)
-k_{2}\left( s\right) k_{2}^{\prime
}\left( s\right) \right) \right \} T\left( s\right) \\
&&+\left \{ k_{1}^{\prime \prime }\left( s\right) -k_{1}^{3}\left(
s\right) -k_{1}\left( s\right) k_{2}^{2}\left( s\right) \right \}
M_{1}\left( s\right)
\\
&&+\left \{ k_{2}^{\prime \prime }\left( s\right) -k_{2}^{3}\left(
s\right) -k_{1}^{2}\left( s\right) k_{2}\left( s\right) \right \}
M_{2}\left( s\right)
\end{eqnarray*}
\end{prop}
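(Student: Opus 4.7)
The plan is to derive the four formulas sequentially by repeated differentiation, using only the Bishop frame equations $T'=k_1 M_1+k_2 M_2$, $M_1'=-k_1 T$, $M_2'=-k_2 T$. Because the Bishop frame has no cross-terms between $M_1$ and $M_2$ in the derivative matrix, the computation is considerably cleaner than the Frenet-frame analogue in Proposition~1 of \cite{AO}, and the work is essentially just disciplined bookkeeping rather than a conceptual argument.

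First I would record $\gamma'(s)=T(s)$ from the unit-speed hypothesis $\|\gamma'\|=1$, then apply $T'=k_1M_1+k_2M_2$ directly to obtain $\gamma''(s)=k_1(s)M_1(s)+k_2(s)M_2(s)$. For the third derivative, I differentiate $\gamma''$ by the product rule,
\begin{equation*}
\gamma'''(s)=k_1'M_1+k_1 M_1'+k_2'M_2+k_2 M_2',
\end{equation*}
and then substitute $M_1'=-k_1 T$ and $M_2'=-k_2 T$. The $T$-components combine as $-k_1^2-k_2^2$, producing the stated third-derivative formula.

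For $\gamma^{iv}$, I would differentiate the three-term expression for $\gamma'''$ summand by summand. The $T$-term $(-k_1^2-k_2^2)T$ contributes a $T$-part $(-k_1^2-k_2^2)'T$ together with the piece $(-k_1^2-k_2^2)(k_1M_1+k_2M_2)$, which after expansion supplies the $-k_1^3-k_1k_2^2$ and $-k_1^2k_2-k_2^3$ contributions appearing in the $M_1$ and $M_2$ coefficients. The two remaining terms $k_1'M_1$ and $k_2'M_2$ differentiate to $k_1''M_1+k_1'M_1'$ and $k_2''M_2+k_2'M_2'$; replacing $M_i'$ via the Bishop equations then yields the extra $T$-part $(-k_1k_1'-k_2k_2')T$ and the remaining $k_1''M_1$ and $k_2''M_2$ contributions to the normal components.

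The only real obstacle is organizational: one must carefully group the $T$-terms from two different sources (the derivative of the existing $T$-coefficient and the Bishop-induced $T$-components coming from $M_1'$ and $M_2'$) and keep track of the algebraic signs. Once the collection is done as described, the resulting expression matches the asserted formula for $\gamma^{iv}(s)$ verbatim, completing the proposition.
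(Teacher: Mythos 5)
Your proposal is correct and is exactly the computation the paper intends (the paper omits the proof, simply citing the analogy with Proposition 1 of \cite{AO}, but the implied argument is precisely this repeated differentiation via the Bishop equations). All coefficients, including the split of the $T$-coefficient of $\gamma^{iv}$ into $(-k_1^2-k_2^2)'$ plus $(-k_1k_1'-k_2k_2')$, match the statement verbatim.
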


\begin{notation}
Let us write%
\begin{equation}
\overline{N}_{1}\left( s\right) =k_{1}\left( s\right) M_{1}\left(
s\right) +k_{2}\left( s\right) M_{2}\left( s\right)  \label{a2}
\end{equation}%
\begin{equation}
\overline{N}_{2}\left( s\right) =k_{1}^{\prime }\left( s\right)
M_{1}\left( s\right) +k_{2}^{\prime }\left( s\right) M_{2}\left(
s\right)  \label{a3}
\end{equation}%
\begin{eqnarray}
\overline{N}_{3}\left( s\right) &=&\left \{ k_{1}^{\prime \prime
}\left( s\right) -k_{1}^{3}\left( s\right) -k_{1}\left( s\right)
k_{2}^{2}\left(
s\right) \right \} M_{1}\left( s\right)  \label{a4} \\
&&+\left \{ k_{2}^{\prime \prime }\left( s\right) -k_{2}^{3}\left(
s\right) -k_{1}^{2}\left( s\right) k_{2}\left( s\right) \right \}
M_{2}\left( s\right) .  \notag
\end{eqnarray}
\end{notation}
\begin{cor}
$\gamma ^{\prime }\left( s\right) ,\gamma ^{\prime \prime }\left(
s\right) ,\gamma ^{\prime \prime \prime }\left( s\right) $ and
$\gamma ^{\imath
v}\left( s\right) $ are linearly dependent if and only if $\overline{N}%
_{1}\left( s\right) $, $\overline{N}_{2}\left( s\right) $ and $\overline{N}%
_{3}\left( s\right) $ are linearly dependent.
\end{cor}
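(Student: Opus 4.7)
The plan is to exploit the triangular relationship between the derivatives $\gamma',\gamma'',\gamma''',\gamma^{iv}$ and the vectors $T,\overline{N}_1,\overline{N}_2,\overline{N}_3$ that is exhibited in the preceding proposition. Reading off the formulas for the four derivatives in the Bishop basis $\{T,M_1,M_2\}$, one sees immediately that $\gamma'=T$, $\gamma''=\overline{N}_1$, $\gamma'''=-(k_1^2+k_2^2)\,T+\overline{N}_2$, and $\gamma^{iv}=\alpha(s)\,T+\overline{N}_3$, where $\alpha(s)$ collects the tangential coefficient of $\gamma^{iv}$. Thus the passage from the ordered list $(T,\overline{N}_1,\overline{N}_2,\overline{N}_3)$ to $(\gamma',\gamma'',\gamma''',\gamma^{iv})$ is realised by an invertible, in fact unit upper-triangular, transition matrix whose entries are smooth functions of $s$.

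Next, I would invert these relations to express the $\overline{N}_k$ as linear combinations of the derivatives,
\begin{equation*}
T=\gamma',\qquad \overline{N}_1=\gamma'',\qquad \overline{N}_2=\gamma'''+(k_1^2+k_2^2)\,\gamma',\qquad \overline{N}_3=\gamma^{iv}-\alpha(s)\,\gamma',
\end{equation*}
which yields the equality of spans $\mathrm{span}\{\gamma',\gamma'',\gamma''',\gamma^{iv}\}=\mathrm{span}\{T,\overline{N}_1,\overline{N}_2,\overline{N}_3\}$. Since $T$ is orthogonal to $M_1,M_2$ while each $\overline{N}_k$ lies in $\mathrm{span}\{M_1,M_2\}$ by the formulas (\ref{a2})--(\ref{a4}), adjoining $T$ to $\{\overline{N}_1,\overline{N}_2,\overline{N}_3\}$ strictly increases the dimension by one.

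Consequently $\mathrm{rank}\{\gamma',\gamma'',\gamma''',\gamma^{iv}\}=1+\mathrm{rank}\{\overline{N}_1,\overline{N}_2,\overline{N}_3\}$, and the claimed equivalence of linear dependence follows at once: the left-hand system drops rank precisely when the right-hand system does. No genuine obstacle is anticipated; the only point requiring care is the bookkeeping of the $T$-coefficients in $\gamma'''$ and $\gamma^{iv}$, so that what remains after subtracting them off is exactly $\overline{N}_2$ and $\overline{N}_3$ as defined in the notation above. Everything else reduces to linear algebra in the pointwise orthonormal frame $\{T,M_1,M_2\}$.
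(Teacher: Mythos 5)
Your argument is correct and is precisely the justification the paper leaves implicit: the corollary is stated without proof as an immediate consequence of the preceding Proposition and Notation, and your observation that the transition from $(T,\overline{N}_{1},\overline{N}_{2},\overline{N}_{3})$ to $(\gamma',\gamma'',\gamma''',\gamma^{iv})$ is unit triangular, combined with $T\perp \mathrm{span}\{M_{1},M_{2}\}\supseteq \mathrm{span}\{\overline{N}_{1},\overline{N}_{2},\overline{N}_{3}\}$, is exactly the intended reasoning. One might only add the remark that in $\mathbb{E}^{3}$ both dependence conditions hold automatically (four vectors in a three-dimensional space, and three vectors in the plane $\mathrm{span}\{M_{1},M_{2}\}$, are always linearly dependent), so the equivalence is only substantive in the higher-dimensional setting from which it is adapted; this does not affect the validity of your proof.
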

\begin{thm}
Let $\gamma $ be a curve of order 3, then%
\begin{equation}
\overline{N}_{3}\left( s\right) =\left \langle
\overline{N}_{3}\left(
s\right) ,\overline{N}_{1}^{\ast }\left( s\right) \right \rangle \overline{N}%
_{1}^{\ast }\left( s\right) +\left \langle \overline{N}_{3}\left( s\right) ,%
\overline{N}_{2}^{\ast }\left( s\right) \right \rangle \overline{N}%
_{2}^{\ast }\left( s\right)  \label{a5}
\end{equation}%
where
\begin{equation}
\overline{N}_{1}^{\ast }\left( s\right)
=\frac{\overline{N}_{1}\left( s\right) }{\left \Vert
\overline{N}_{1}\left( s\right) \right \Vert }, \label{a6}
\end{equation}%
\begin{equation}
\overline{N}_{2}^{\ast }\left( s\right)
=\frac{\overline{N}_{2}\left(
s\right) -\left \langle \overline{N}_{2}\left( s\right) ,\overline{N}%
_{1}^{\ast }\left( s\right) \right \rangle \overline{N}_{1}^{\ast
}\left( s\right) }{\left \Vert \overline{N}_{2}\left( s\right)
-\left \langle \overline{N}_{2}\left( s\right)
,\overline{N}_{1}^{\ast }\left( s\right) \right \rangle
\overline{N}_{1}^{\ast }\left( s\right) \right \Vert }. \label{a7}
\end{equation}
\end{thm}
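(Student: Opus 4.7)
The plan is to observe that all three vectors $\overline{N}_1,\overline{N}_2,\overline{N}_3$ lie inside the two-dimensional subspace $V = \operatorname{span}\{M_1(s),M_2(s)\}$, so the theorem amounts to expanding $\overline{N}_3$ in an orthonormal basis of $V$ produced by Gram--Schmidt from $\overline{N}_1,\overline{N}_2$.

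First I would read off from \eqref{a2}--\eqref{a4} that each of $\overline{N}_1,\overline{N}_2,\overline{N}_3$ is a linear combination of $M_1(s)$ and $M_2(s)$ only (the $T$-component is absent by construction of the $\overline{N}_i$). Since $\{M_1,M_2\}$ is orthonormal (being part of the Bishop frame), $V$ is a genuine $2$-plane and any orthonormal pair inside $V$ spans $V$.

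Next I would verify that $\overline{N}_1^*$ and $\overline{N}_2^*$ as defined in \eqref{a6}--\eqref{a7} form such an orthonormal pair. The vector $\overline{N}_1^*$ is the unit vector along $\overline{N}_1$, while $\overline{N}_2^*$ is the Gram--Schmidt remainder of $\overline{N}_2$ with respect to $\overline{N}_1^*$ and then normalized; by construction $\langle \overline{N}_1^*,\overline{N}_2^*\rangle = 0$ and $\|\overline{N}_2^*\| = 1$. Provided $\overline{N}_1$ and $\overline{N}_2$ are linearly independent (which, by the Corollary, is exactly the generic case where $\gamma',\gamma'',\gamma''',\gamma^{iv}$ are linearly independent, the situation in which a curve is of genuine osculating order $3$), the pair $\{\overline{N}_1^*,\overline{N}_2^*\}$ is then an orthonormal basis of $V$. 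The expansion formula \eqref{a5} is simply the statement that any vector of $V$ equals the sum of its projections on the two basis directions, applied to $\overline{N}_3\in V$.

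The only genuine obstacle is the degenerate case in which $\overline{N}_1$ and $\overline{N}_2$ are collinear, because then $\overline{N}_2^*$ is not defined by \eqref{a7}. By the Corollary, however, this forces $\overline{N}_1,\overline{N}_2,\overline{N}_3$ to be mutually linearly dependent, so $\overline{N}_3$ is a scalar multiple of $\overline{N}_1^*$ and the second term in \eqref{a5} should be interpreted as zero; restricting to the generic open set where the curve truly has osculating order $3$ disposes of this case, which is the convention implicit in the statement.
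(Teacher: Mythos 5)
Your proposal is correct, and in fact it supplies an argument that the paper does not: the theorem is stated there with no proof at all (it is simply the Bishop-frame analogue of the corresponding statement in \cite{AO}). Your route --- observe that $\overline{N}_{1},\overline{N}_{2},\overline{N}_{3}$ all lie in the $2$-plane $V=\operatorname{span}\{M_{1},M_{2}\}$, note that \eqref{a6}--\eqref{a7} are exactly the Gram--Schmidt orthonormalization of $\overline{N}_{1},\overline{N}_{2}$ inside $V$, and then read \eqref{a5} as the expansion of $\overline{N}_{3}\in V$ in the resulting orthonormal basis --- is the natural one and is evidently what the authors have in mind, since their subsequent propositions substitute the explicit expressions \eqref{b1}--\eqref{b2} for $\overline{N}_{1}^{\ast},\overline{N}_{2}^{\ast}$ into \eqref{a5}. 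One small imprecision in your handling of the degenerate case: any three vectors in a $2$-plane are automatically linearly dependent, so the Corollary gives you no leverage there, and linear dependence of the triple does not by itself force $\overline{N}_{3}$ to be a multiple of $\overline{N}_{1}$ (e.g.\ at an isolated point where $k_{2}=k_{2}'=0$ but $k_{2}''\neq 0$, the vectors $\overline{N}_{1}$ and $\overline{N}_{2}$ are collinear along $M_{1}$ while $\overline{N}_{3}$ has a nonzero $M_{2}$-component). The correct way to dispose of this case is the one you also give: for a curve of genuine osculating order $3$ the vectors $\gamma',\gamma'',\gamma'''$ are independent, and since $\gamma''=\overline{N}_{1}$ and $\gamma'''=-(k_{1}^{2}+k_{2}^{2})T+\overline{N}_{2}$, this forces $\overline{N}_{1},\overline{N}_{2}$ to be linearly independent, so the denominator in \eqref{a7} never vanishes and the main argument applies everywhere.
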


\begin{defn}
The unit speed curves of order 3\ are

$i)$ of type weak Bishop $AW(2)$ if they satisfy%
\begin{equation}
\overline{N}_{3}\left( s\right) =\left \langle
\overline{N}_{3}\left(
s\right) ,\overline{N}_{2}^{\ast }\left( s\right) \right \rangle \overline{N}%
_{2}^{\ast }\left( s\right)  \label{a8*}
\end{equation}

$ii)$ of type weak Bishop $AW(3)$ if they satisfy%
\begin{equation}
\overline{N}_{3}\left( s\right) =\left\langle \overline{N}_{3}\left(
s\right) ,\overline{N}_{1}^{\ast }\left( s\right) \right\rangle \overline{N}%
_{1}^{\ast }\left( s\right) .  \label{a9}
\end{equation}
\end{defn}

\begin{prop}
\ Let $\gamma $ be a curve of order 3. If $\gamma $ is of type weak Bishop $%
AW(2)$ then the equations%
\begin{equation}
\begin{array}{l}
\left( k_{1}^{\prime \prime }\left( s\right) -k_{1}^{3}\left(
s\right) -k_{1}\left( s\right) k_{2}^{2}\left( s\right) \right)
\left(
k_{1}^{2}\left( s\right) +k_{2}^{2}\left( s\right) \right) \\
=k_{2}^{2}\left( s\right) \left( k_{1}^{\prime \prime }\left(
s\right) -k_{1}^{3}\left( s\right) -k_{1}\left( s\right)
k_{2}^{2}\left( s\right)
\right) \\
-k_{1}\left( s\right) k_{2}\left( s\right) \left( k_{2}^{\prime
\prime }\left( s\right) -k_{2}^{3}\left( s\right) -k_{1}^{2}\left(
s\right)
k_{2}\left( s\right) \right) ,%
\end{array}
\label{a10}
\end{equation}%
\begin{equation}
\begin{array}{l}
\left( \text{$k_{2}^{\prime \prime }\left( s\right) -k_{2}^{3}\left(
s\right) -k_{1}^{2}\left( s\right) k_{2}\left( s\right) $}\right)
\left(
k_{1}^{2}\left( s\right) +k_{2}^{2}\left( s\right) \right) \\
=k_{1}^{2}\left( s\right) \left( k_{2}^{\prime \prime }\left(
s\right) -k_{2}^{3}\left( s\right) -k_{1}^{2}\left( s\right)
k_{2}\left( s\right)
\right) \\
-k_{1}\left( s\right) k_{2}\left( s\right) \left( k_{1}^{\prime
\prime }\left( s\right) -k_{1}^{3}\left( s\right) -k_{1}\left(
s\right)
k_{2}^{2}\left( s\right) \right)%
\end{array}
\label{a11}
\end{equation}

are hold and a solution of these equations are%
\begin{equation}
k_{1}\left( s\right) =-k_{2}\left( s\right) =\pm
\frac{1}{s+c};c=cons. \label{a12}
\end{equation}
\end{prop}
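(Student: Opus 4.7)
The strategy is to turn the defining identity (\ref{a8*}) into explicit scalar equations by computing $\overline{N}_2^{\ast}$ componentwise in the Bishop basis, and then to verify the claimed solution by direct substitution. I first abbreviate $A := k_1'' - k_1^{3} - k_1 k_2^{2}$ and $B := k_2'' - k_2^{3} - k_1^{2} k_2$, so by (\ref{a4}) we have $\overline{N}_3 = A\,M_1 + B\,M_2$. Using (\ref{a2}), (\ref{a3}) and (\ref{a6}) one finds $\langle \overline{N}_2, \overline{N}_1^{\ast}\rangle = (k_1 k_1' + k_2 k_2')/\sqrt{k_1^{2}+k_2^{2}}$, and a short expansion of the numerator in (\ref{a7}) gives
\[
\overline{N}_2 - \langle \overline{N}_2, \overline{N}_1^{\ast}\rangle\,\overline{N}_1^{\ast} = \frac{k_1' k_2 - k_1 k_2'}{k_1^{2}+k_2^{2}}\bigl(k_2 M_1 - k_1 M_2\bigr),
\]
whose norm equals $|k_1' k_2 - k_1 k_2'|/\sqrt{k_1^{2}+k_2^{2}}$. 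Hence, up to a sign, $\overline{N}_2^{\ast} = (k_2 M_1 - k_1 M_2)/\sqrt{k_1^{2}+k_2^{2}}$.

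Next, I would substitute this into (\ref{a8*}). Computing $\langle \overline{N}_3, \overline{N}_2^{\ast}\rangle = (A k_2 - B k_1)/\sqrt{k_1^{2}+k_2^{2}}$ (with the same sign) and matching $M_1$- and $M_2$-coefficients in $\overline{N}_3 = \langle \overline{N}_3, \overline{N}_2^{\ast}\rangle\,\overline{N}_2^{\ast}$ produces the two identities
\[
A(k_1^{2}+k_2^{2}) = k_2(A k_2 - B k_1), \qquad B(k_1^{2}+k_2^{2}) = -k_1(A k_2 - B k_1),
\]
which are exactly (\ref{a10}) and (\ref{a11}). After cancellation these collapse to $k_1(A k_1 + B k_2) = 0$ and $k_2(A k_1 + B k_2) = 0$ respectively, so provided $(k_1,k_2) \not\equiv (0,0)$ the system is equivalent to the single scalar ODE
\[
k_1 k_1'' + k_2 k_2'' = (k_1^{2}+k_2^{2})^{2}.
\]

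Finally, to confirm (\ref{a12}) I would simply plug in $k_1 = -k_2 = \pm 1/(s+c)$: both $k_1 k_1''$ and $k_2 k_2''$ equal $2/(s+c)^{4}$, while $(k_1^{2}+k_2^{2})^{2} = 4/(s+c)^{4}$, so the ODE is satisfied. The only nonroutine step in the argument is the projection calculation that identifies $\overline{N}_2^{\ast}$ with a multiple of $k_2 M_1 - k_1 M_2$; once this is done, the rest is algebraic bookkeeping together with a one-line verification.
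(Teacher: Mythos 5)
Your proposal is correct and follows essentially the same route as the paper: you compute $\overline{N}_{2}^{\ast}$ by Gram--Schmidt in the Bishop basis (arriving at the same expression $(k_{2}M_{1}-k_{1}M_{2})/\sqrt{k_{1}^{2}+k_{2}^{2}}$ as the paper's equation (\ref{b2})), substitute into (\ref{a8*}), and match $M_{1}$- and $M_{2}$-components to obtain (\ref{a10}) and (\ref{a11}). Your extra step of collapsing the system to the single equation $k_{1}k_{1}''+k_{2}k_{2}''=(k_{1}^{2}+k_{2}^{2})^{2}$ and then verifying (\ref{a12}) by direct substitution is, if anything, a cleaner justification of the final claim than the paper's assertion that the equations force $k_{1}=-k_{2}$.
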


\begin{proof}
With the use of equations (\ref{a2}), (\ref{a3}), (\ref{a6}) and
(\ref{a7}), we get
\begin{equation}
\overline{N}_{1}^{\ast }\left( s\right)
=\frac{1}{\sqrt{k_{1}^{2}\left( s\right) +k_{2}^{2}\left( s\right)
}}\left( k_{1}\left( s\right) M_{1}\left( s\right) +k_{2}\left(
s\right) M_{2}\left( s\right) \right)   \label{b1}
\end{equation}%
\ and
\begin{equation}
\overline{N}_{2}^{\ast }\left( s\right)
=\frac{1}{\sqrt{k_{1}^{2}\left( s\right) +k_{2}^{2}\left( s\right)
}}\left( k_{2}\left( s\right) M_{1}\left( s\right) -k_{1}\left(
s\right) M_{2}\left( s\right) \right) .  \label{b2}
\end{equation}%
Substituting equation (\ref{b2}) in (\ref{a8*}), we obtain (\ref{a10}) and (%
\ref{a11}). From these equations, we get $k_{1}\left( s\right)
=-k_{2}\left( s\right) $. Finally, writing this in equation
(\ref{a10}) we get the result.
\end{proof}

\begin{prop}
Let $\gamma $ be a curve of order 3. If $\gamma $ is of type weak Bishop $%
AW(3)$ then the curvature equations%
\begin{equation*}
\begin{array}{l}
\left( k_{1}^{\prime \prime }\left( s\right) -k_{1}^{3}\left(
s\right) -k_{1}\left( s\right) k_{2}^{2}\left( s\right) \right)
\left(
k_{1}^{2}\left( s\right) +k_{2}^{2}\left( s\right) \right) \\
=(k_{1}^{2}\left( s\right) \left( k_{1}^{\prime \prime }\left(
s\right) -k_{1}^{3}\left( s\right) -k_{1}\left( s\right)
k_{2}^{2}\left( s\right)
\right) \\
+k_{1}\left( s\right) k_{2}\left( s\right) \left( k_{2}^{\prime
\prime }\left( s\right) -k_{2}^{3}\left( s\right) -k_{1}^{2}\left(
s\right)
k_{2}\left( s\right) \right)%
\end{array}%
\end{equation*}

and%
\begin{equation*}
\begin{array}{l}
\left( k_{2}^{\prime \prime }\left( s\right) -k_{2}^{3}\left(
s\right) -k_{1}^{2}\left( s\right) k_{2}\left( s\right) \right)
\left(
k_{1}^{2}\left( s\right) +k_{2}^{2}\left( s\right) \right) \\
=k_{2}^{2}\left( s\right) \left( k_{2}^{\prime \prime }\left(
s\right) -k_{2}^{3}\left( s\right) -k_{1}^{2}\left( s\right)
k_{2}\left( s\right)
\right) \\
+k_{1}\left( s\right) k_{2}\left( s\right) \left( k_{1}^{\prime
\prime }\left( s\right) -k_{1}^{3}\left( s\right) -k_{1}\left(
s\right)
k_{2}^{2}\left( s\right) \right) .%
\end{array}%
\end{equation*}

are hold.
\end{prop}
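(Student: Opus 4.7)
The plan is to mirror the proof of the preceding weak Bishop $AW(2)$ proposition, but using $\overline{N}_{1}^{\ast}$ in place of $\overline{N}_{2}^{\ast}$. First I would recall the explicit coordinate expression for $\overline{N}_{1}^{\ast}$ given in equation (\ref{b1}) together with the expansion of $\overline{N}_{3}$ in the $\{M_{1},M_{2}\}$-basis supplied by (\ref{a4}). To keep the symbols manageable, I would introduce abbreviations
\[ A(s):=k_{1}^{\prime\prime}(s)-k_{1}^{3}(s)-k_{1}(s)k_{2}^{2}(s), \qquad B(s):=k_{2}^{\prime\prime}(s)-k_{2}^{3}(s)-k_{1}^{2}(s)k_{2}(s), \]
so that $\overline{N}_{3}=A\,M_{1}+B\,M_{2}$ and $\overline{N}_{1}^{\ast}=(k_{1}M_{1}+k_{2}M_{2})/\sqrt{k_{1}^{2}+k_{2}^{2}}$.

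Next I would compute the scalar product $\langle \overline{N}_{3},\overline{N}_{1}^{\ast}\rangle = (k_{1}A+k_{2}B)/\sqrt{k_{1}^{2}+k_{2}^{2}}$ and form the rank-one projection
\[ \langle \overline{N}_{3},\overline{N}_{1}^{\ast}\rangle\,\overline{N}_{1}^{\ast} = \frac{k_{1}A+k_{2}B}{k_{1}^{2}+k_{2}^{2}}\bigl(k_{1}M_{1}+k_{2}M_{2}\bigr). \]
The defining relation (\ref{a9}) of weak Bishop $AW(3)$ identifies this projection with $\overline{N}_{3}$ itself. Since $M_{1}$ and $M_{2}$ are linearly independent, I would then equate the coefficient of $M_{1}$ and the coefficient of $M_{2}$ on the two sides and clear the common denominator $k_{1}^{2}+k_{2}^{2}$.

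Equating the $M_{1}$-coefficients produces $A(k_{1}^{2}+k_{2}^{2})=k_{1}^{2}A+k_{1}k_{2}B$, which, after substituting back the definition of $A$ and $B$, is exactly the first displayed equation in the statement; equating the $M_{2}$-coefficients yields $B(k_{1}^{2}+k_{2}^{2})=k_{1}k_{2}A+k_{2}^{2}B$, which becomes the second. There is no genuine obstacle beyond careful bookkeeping: the whole assertion is nothing more than reading off the components of the orthogonal projection of $\overline{N}_{3}$ onto the line spanned by $\overline{N}_{1}^{\ast}$ in the Bishop basis. Unlike in the previous proposition I would not attempt to extract a closed-form solution for $k_{1}(s)$ and $k_{2}(s)$, since the proposition asserts only that the two curvature relations must hold.
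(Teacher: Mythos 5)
Your proposal is correct and follows the paper's own argument exactly: the paper's proof is precisely the substitution of the expression (\ref{b1}) for $\overline{N}_{1}^{\ast}$ into the defining relation (\ref{a9}), and your component-by-component comparison in the $\{M_{1},M_{2}\}$-basis is just that substitution carried out explicitly. No further commentary is needed.
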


\begin{proof}
Substituting the equation (\ref{b1}) in (\ref{a9}) we get the
result.
\end{proof}

\begin{defn}
The unit speed curves of order 3\ are

$\ i)$ of type Bishop $AW(1)$ if they satisfy%
\begin{equation}
\overline{N}_{3}\left( s\right) =0  \label{a15}
\end{equation}%
\ \ \ \ \ \ \ \ \ \ \ \

$ii)$\ of type Bishop $AW(2)$ if they satisfy%
\begin{equation}
\left \Vert \overline{N}_{2}\right \Vert ^{2}\overline{N}_{3}=\left
\langle \overline{N}_{3},\overline{N}_{2}\right \rangle
\overline{N}_{2}  \label{a16}
\end{equation}%
\ \ \ \ \ \ \

$iii)$ of type Bishop $AW(3)$ if they satisfy%
\begin{equation}
\left \Vert \overline{N}_{1}\right \Vert ^{2}\overline{N}_{3}=\left
\langle \overline{N}_{3},\overline{N}_{1}\right \rangle
\overline{N}_{1}. \label{a17}
\end{equation}
\end{defn}

\begin{prop}
Let $\gamma $ be a curve of order 3. If $\gamma $ is of type Bishop
$AW(1)$ then
\begin{equation}
k_{1}^{\prime \prime }\left( s\right) -k_{1}^{3}\left( s\right)
-k_{1}\left( s\right) k_{2}^{2}\left( s\right) =0,  \label{a18}
\end{equation}%
\begin{equation}
k_{2}^{\prime \prime }\left( s\right) -k_{2}^{3}\left( s\right)
-k_{1}^{2}\left( s\right) k_{2}\left( s\right) =0  \label{a19}
\end{equation}%
and a solution of this differential equation system is
\begin{equation}
k_{1}\left( s\right) =k_{2}\left( s\right) =\pm \frac{1}{s+c},\text{
}c\in \mathbb{R}.  \label{a20}
\end{equation}
\end{prop}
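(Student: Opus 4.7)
The plan is to read off the two curvature conditions directly from the defining relation $\overline{N}_{3}(s)=0$, and then to exhibit a family of solutions by imposing the natural symmetry $k_{1}=k_{2}$.

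First I would use the expression (\ref{a4}) for $\overline{N}_{3}(s)$ as an explicit linear combination of $M_{1}(s)$ and $M_{2}(s)$. Since $\{T,M_{1},M_{2}\}$ is the Bishop frame, the vectors $M_{1}(s)$ and $M_{2}(s)$ are linearly independent at every point. Therefore the vanishing $\overline{N}_{3}(s)=0$ required by (\ref{a15}) is equivalent to the simultaneous vanishing of the two coefficients, which is exactly the system (\ref{a18})--(\ref{a19}). This step is essentially a coefficient comparison and contains no real difficulty.

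Next I would search for an explicit solution of the nonlinear coupled system. A natural ansatz is the symmetric one $k_{1}(s)=k_{2}(s)=:f(s)$, since the two equations (\ref{a18}) and (\ref{a19}) are interchanged under $k_{1}\leftrightarrow k_{2}$. Under this ansatz both equations collapse to the single scalar ODE $f''(s)-2f^{3}(s)=0$. I would then verify by direct differentiation that $f(s)=\pm 1/(s+c)$ solves this ODE: one has $f'(s)=\mp 1/(s+c)^{2}$, $f''(s)=\pm 2/(s+c)^{3}$, and $2f^{3}(s)=\pm 2/(s+c)^{3}$, so the equation is satisfied, yielding (\ref{a20}).

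The main obstacle is not in deriving the system but in being clear that the proposition only claims one solution family, not the complete classification: the ODE $f''=2f^{3}$ is a first integral away ($ (f')^{2}=f^{4}+C_{1}$) and in general admits Weierstrass-type solutions, so I would be careful in the write-up to phrase the final step as ``a solution,'' in line with the statement, rather than attempting a full integration of the coupled nonlinear system (\ref{a18})--(\ref{a19}).
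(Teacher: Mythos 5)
Your proposal is correct and follows essentially the same route as the paper: read off (\ref{a18})--(\ref{a19}) from $\overline{N}_{3}=0$ by linear independence of $M_{1},M_{2}$, then exhibit the solution family (\ref{a20}). If anything you are more careful than the paper, which asserts that ``solving the system'' yields $k_{1}=k_{2}$, whereas (as your remark about the first integral suggests) the symmetric ansatz only produces one solution family rather than a classification --- your phrasing ``a solution'' is the accurate one.
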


\begin{proof}
With the use of equations (\ref{a4}) and (\ref{a15}), we get the
differential system (\ref{a18}) and (\ref{a19}). By solving this
system, we
get $k_{1}\left( s\right) =k_{2}\left( s\right) $ and using this in (\ref%
{a18}) or (\ref{a19}), we get the result.
\end{proof}

\begin{prop}
Let $\gamma $ be a curve of order 3. If $\gamma $ is of type Bishop
$AW(2)$
then%
\begin{equation}
k_{2}^{\prime }\left( s\right) \left( k_{1}^{\prime \prime }\left(
s\right) -k_{1}^{3}\left( s\right) -k_{1}\left( s\right)
k_{2}^{2}\left( s\right) \right) =k_{1}^{\prime }\left( s\right)
\left( k_{2}^{\prime \prime }\left( s\right) -k_{2}^{3}\left(
s\right) -k_{1}^{2}\left( s\right) k_{1}\left( s\right) \right) .
\label{a21}
\end{equation}
\end{prop}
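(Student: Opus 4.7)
The plan is to substitute the explicit expansions of $\overline{N}_{2}(s)$ and $\overline{N}_{3}(s)$ from (\ref{a3}) and (\ref{a4}) directly into the Bishop $AW(2)$ condition (\ref{a16}) and read off the resulting scalar identity component by component in the Bishop frame.

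First I would set, for brevity, $A(s)=k_{1}^{\prime\prime}(s)-k_{1}^{3}(s)-k_{1}(s)k_{2}^{2}(s)$ and $B(s)=k_{2}^{\prime\prime}(s)-k_{2}^{3}(s)-k_{1}^{2}(s)k_{2}(s)$, so that $\overline{N}_{3}=AM_{1}+BM_{2}$ and $\overline{N}_{2}=k_{1}^{\prime}M_{1}+k_{2}^{\prime}M_{2}$. Since $\{M_{1},M_{2}\}$ is orthonormal, one gets immediately $\Vert\overline{N}_{2}\Vert^{2}=(k_{1}^{\prime})^{2}+(k_{2}^{\prime})^{2}$ and $\langle\overline{N}_{3},\overline{N}_{2}\rangle=k_{1}^{\prime}A+k_{2}^{\prime}B$.

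Next I would substitute these into $\Vert\overline{N}_{2}\Vert^{2}\overline{N}_{3}=\langle\overline{N}_{3},\overline{N}_{2}\rangle\overline{N}_{2}$ and equate the $M_{1}$-coefficients and $M_{2}$-coefficients separately. The $M_{1}$-equation reads $((k_{1}^{\prime})^{2}+(k_{2}^{\prime})^{2})A=(k_{1}^{\prime}A+k_{2}^{\prime}B)k_{1}^{\prime}$, which after cancelling $(k_{1}^{\prime})^{2}A$ from both sides collapses to $k_{2}^{\prime}(k_{2}^{\prime}A-k_{1}^{\prime}B)=0$. The $M_{2}$-equation likewise collapses to $k_{1}^{\prime}(k_{1}^{\prime}B-k_{2}^{\prime}A)=0$. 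Both reduce to the single relation $k_{2}^{\prime}A=k_{1}^{\prime}B$, which is exactly (\ref{a21}) once $A$ and $B$ are substituted back.

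There is no real obstacle; the only thing to be careful about is that the two component equations carry the same information (up to the degenerate cases $k_{1}^{\prime}=0$ or $k_{2}^{\prime}=0$, in which the identity $k_{2}^{\prime}A=k_{1}^{\prime}B$ is still forced by whichever of the two equations is nontrivial). I would state the computation of the two inner products in a single short display, then present the collinearity condition obtained from cancellation, and conclude with the stated equation.
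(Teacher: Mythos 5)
Your proposal is correct and is essentially identical to the paper's own (one-line) proof, which simply says to substitute (\ref{a3}) and (\ref{a4}) into (\ref{a16}); your component-by-component computation in the $\{M_{1},M_{2}\}$ basis fills in exactly those details, including the careful check that the relation persists at points where $k_{1}^{\prime}$ or $k_{2}^{\prime}$ vanishes. One small remark: your derivation produces $k_{1}^{2}(s)k_{2}(s)$ in the final factor, which shows that the term $k_{1}^{2}(s)k_{1}(s)$ printed in (\ref{a21}) is a typo.
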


\begin{proof}
Substituting equations (\ref{a3}) and (\ref{a4}) in (\ref{a16}), we
get the result.
\end{proof}

\begin{prop}
Let $\gamma $ be a curve of order 3. If $\gamma $ is of type Bishop
$AW(3)$
then%
\begin{equation}
k_{2}\left( s\right) \left( k_{1}^{\prime \prime }\left( s\right)
-k_{1}^{3}\left( s\right) -k_{1}\left( s\right) k_{2}^{2}\left(
s\right) \right) =k_{1}\left( s\right) \left( k_{2}^{\prime \prime
}\left( s\right) -k_{2}^{3}\left( s\right) -k_{1}^{2}\left( s\right)
k_{1}\left( s\right) \right) .  \label{a22}
\end{equation}
\end{prop}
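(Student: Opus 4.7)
The plan is to imitate exactly the strategy used in the preceding proposition on Bishop $AW(2)$: expand the defining vector identity (\ref{a17}) in the orthonormal basis $\{M_{1},M_{2}\}$ supplied by the Bishop frame, and read off the scalar relation.

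First I would introduce the shorthands $A(s)=k_{1}''-k_{1}^{3}-k_{1}k_{2}^{2}$ and $B(s)=k_{2}''-k_{2}^{3}-k_{1}^{2}k_{2}$ so that (\ref{a4}) reads $\overline{N}_{3}=A\,M_{1}+B\,M_{2}$, while (\ref{a2}) gives $\overline{N}_{1}=k_{1}M_{1}+k_{2}M_{2}$. Using the orthonormality of $\{M_{1},M_{2}\}$, I would then compute $\|\overline{N}_{1}\|^{2}=k_{1}^{2}+k_{2}^{2}$ and $\langle \overline{N}_{3},\overline{N}_{1}\rangle = k_{1}A+k_{2}B$.

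Next I would substitute these quantities into the Bishop $AW(3)$ condition (\ref{a17}) and compare the coefficients of $M_{1}$ and of $M_{2}$ on both sides. The $M_{1}$-component yields $(k_{1}^{2}+k_{2}^{2})A=(k_{1}A+k_{2}B)k_{1}$, which after cancelling $k_{1}^{2}A$ reduces to $k_{2}^{2}A=k_{1}k_{2}B$; dividing by $k_{2}$ gives $k_{2}A=k_{1}B$, which is exactly (\ref{a22}). I would briefly note that the $M_{2}$-component produces the same relation, so no additional information is obtained; this is the only content to verify.

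There is no real obstacle here: the statement is a direct algebraic consequence of the definition, just as in Propositions for weak Bishop $AW(3)$ and Bishop $AW(2)$. The only point worth flagging is the (harmless) genericity assumption $k_{2}\not\equiv 0$ used when passing from $k_{2}^{2}A=k_{1}k_{2}B$ to $k_{2}A=k_{1}B$; at points where $k_{2}$ vanishes the identity (\ref{a22}) holds trivially, so no case analysis is needed in the final statement.
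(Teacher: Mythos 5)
Your proposal is correct and follows essentially the same route as the paper, whose proof is exactly the one-line substitution of (\ref{a2}) and (\ref{a4}) into (\ref{a17}); your added detail of comparing the $M_{1}$- and $M_{2}$-components and noting the genericity of the division by $k_{2}$ only makes explicit what the paper leaves implicit. The relation $k_{2}A=k_{1}B$ you obtain is the intended conclusion (the displayed equation (\ref{a22}) writes $k_{1}^{2}k_{1}$ where $k_{1}^{2}k_{2}$ is clearly meant).
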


\begin{proof}
Substituting equations (\ref{a2}) and (\ref{a4}) in (\ref{a17}), we
get the result.
\end{proof}

\end{document}